\documentclass[11pt,reqno]{amsart}

\usepackage[margin = 1.3in]{geometry}
\usepackage[T1]{fontenc}
\usepackage{lmodern}
\usepackage{microtype}
\usepackage{amsmath,amssymb,mathtools}
\usepackage{enumitem}
\usepackage{booktabs}
\usepackage{hyperref}

\newtheorem{theorem}{Theorem}[section]
\newtheorem{proposition}[theorem]{Proposition}
\newtheorem{lemma}[theorem]{Lemma}
\newtheorem{corollary}[theorem]{Corollary}
\newtheorem{problem}[theorem]{Problem}

\theoremstyle{definition}

\theoremstyle{remark}

\DeclareMathOperator{\Aut}{Aut}

\DeclareMathOperator{\Soc}{soc}

\DeclareMathOperator{\PSp}{PSp}

\newcommand{\normal}{\trianglelefteq}

\newcommand{\cP}{\mathcal P}

\title[Products of nonconjugate maximal subgroups]{Products of Nonconjugate Maximal Subgroups and Solvability}

\author[J. B. Li]{Jinbao Li} \address{School  of Mathematics and Physics, Suqian University, Suqian, Jiangsu 223800, China.}
\email{leejinbao25@hotmail.com}

\author[Y. Yang]{Yong Yang}
\address{Department of Mathematics, Texas State University, 601 University Drive, San Marcos, TX 78666, USA.}
\email{yang@txstate.edu}

\date{}

\begin{document}

\begin{abstract}
We prove that a finite group $G$ is solvable whenever $MN=G$ for
every pair of nonconjugate maximal subgroups $M,N<G$. Equivalently,
every finite nonsolvable group has two nonconjugate maximal
subgroups whose setwise product is proper. This gives a negative
answer to Problem 10.34 of the Kourovka Notebook. As an application, we also answer an open question raised by Guo.\\
{\bf Keywords:} maximal subgroup, group factorization, solvable
group, almost simple group, primitive group \\
{\bf AMS Mathematics Subject Classification(2010):} 20D05, 20D10,
20D60.
\end{abstract}

\maketitle

\section{Introduction}

All groups considered are finite groups.

Let $G$ be a solvable group. Then any two maximal subgroups of $G$
are conjugate in $G$ if and only if they have the same core in $G$
(cf. \cite[2.4.3]{Guo2}). It follows  that $G$ coincides with the
product of any two nonconjugate maximal subgroups of $G$. An
interesting question is whether the converse holds. In 1986, along this line, the following problem was proposed
by V.S. Monakhov in the 10th Issue of  Kourovka Notebook  (see \cite[Problem 10.34]{Kourovka}).

\begin{problem}
  Does there exist a non-solvable  finite group which coincides with the product
of any two of its nonconjugate maximal subgroups?
\end{problem}

It is known from editors' comment following \cite[Problem 10.34]{Kourovka} that this problem was partially solved by T.V. Tikhonenko
and V.N. Tyutyanov in \cite{TT} for almost simple groups. They  proved that no almost simple group has this property.  Our main result is the following.

\begin{theorem}\label{thm:main}
Let $G$ be a finite group. Suppose that
\begin{equation*}\label{eq:property}
MN=G
\end{equation*}
whenever $M$ and $N$ are nonconjugate maximal subgroups of $G$. Then
$G$ is solvable.
\end{theorem}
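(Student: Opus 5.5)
We argue by taking $G$ to be a counterexample of minimal order. The first step is to show that the hypothesis passes to quotients. If $K\normal G$, then the maximal subgroups of $G/K$ are exactly the $M/K$ with $M$ maximal in $G$ and $K\le M$. Two such subgroups $M/K$ and $N/K$ are conjugate in $G/K$ if and only if $M$ and $N$ are conjugate in $G$. Also $(M/K)(N/K)=MN/K$. So every proper quotient satisfies the hypothesis, and by minimality every proper quotient of $G$ is solvable. This forces $G$ to have a unique minimal normal subgroup $N$, and $N$ is nonsolvable: if $N_1\ne N_2$ were minimal normal, then $G$ would embed in $G/N_1\times G/N_2$, which is solvable; and if the unique minimal normal subgroup were solvable, then $G/N$ and $N$ would both be solvable. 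Hence $N=T_1\times\cdots\times T_k$ with the $T_i\cong T$ nonabelian simple, and $C_G(N)=1$. We may also assume $\Phi(G)=1$, since $\Phi(G)$ is nilpotent and $G/\Phi(G)$ inherits the hypothesis. So there is a maximal subgroup $M$ with $N\not\le M$, and $G$ is a primitive group with socle $N$.

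The second step uses the factorization $G=MN$ for every maximal $M$ not containing $N$. Choose a maximal subgroup $H$ of $G$ with $N\le H$; one exists because $G/N$ is a nontrivial solvable group (if $G=N$ is simple, the result of \cite{TT} already applies). Let $M$ be any maximal subgroup not containing $N$. Then $M$ and $H$ are not conjugate, since they have different cores. Their product $MH$ contains $N$ and $M$, so it equals $G$ automatically, and this pair gives no information. The useful pairs are two nonconjugate maximal subgroups $M_1, M_2$ that both supplement $N$. Then $M_1M_2=G$ is a genuine factorization of $G$ by two core-free subgroups. The plan is therefore to produce, inside $G$, two nonconjugate core-free maximal subgroups whose product is too small. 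A natural choice is $M_1=N_G(P)N$-type subgroups: by Frattini, $G=N\,N_G(P)$ for a Sylow $p$-subgroup $P$ of $N$. Concretely, let $M_1$ be a maximal subgroup containing $N_G(P)$ for a suitable prime $p$ dividing $|T|$, and let $M_2$ be a maximal subgroup containing $N_G(Q)$ for a second prime $q$. Then check with $|M_1||M_2|\ge |G|\,|M_1\cap M_2|$ against order bounds.

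The third step, which I expect to be the main obstacle, is handling the reduction to the almost simple case. When $k=1$, $G$ is almost simple, and the theorem of Tikhonenko and Tyutyanov \cite{TT} gives two nonconjugate maximal subgroups with proper product. When $k>1$, I would use the O'Nan--Scott structure. Consider $N_G(T_1)$, which acts on $T_1$ and induces an almost simple group $A$ with socle $T$. The plan is to take a pair of nonconjugate maximal subgroups $X,Y$ of $A$ with $XY\ne A$, supplied by \cite{TT}, and lift them. Lift them to product-type subgroups $\bigl(X_0\times\cdots\times X_0\bigr)$ and $\bigl(Y_0\times\cdots\times Y_0\bigr)$, where $X_0=X\cap T$ and $Y_0=Y\cap T$. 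Then take the normalizers in $G$ of these subgroups, which are supplements to $N$ by a Frattini argument if $X,Y$ are chosen $\Aut$-invariant up to conjugacy. In addition, since $G$ permutes the $T_i$ transitively, we can use the diagonal-type maximal subgroups. A maximal subgroup containing $N_G(D)$, with $D$ a full diagonal of $N$, has order too small relative to a product-type subgroup: $|D|=|T|$ whereas $|G:N|$ is bounded by $|\mathrm{Out}(T)|^k k!$. This makes its product with a product-type maximal subgroup proper by a counting argument.

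The difficulty is ensuring that the lifted subgroups are maximal, or are contained in maximal subgroups that are still nonconjugate and still have small product. I would handle this by arguing with orders: if every maximal overgroup product were $G$, then $|G|\le |M_1||M_2|/|M_1\cap M_2|$, and compare with $|N|=|T|^k$. If the order comparison fails in a few cases, the fallback is to choose $p=2$ and an odd prime $p$ dividing $|T|$ with non-conjugate Sylow normalizer overgroups, and use the fact that a product of two subgroups of $N$-part coprime orders cannot cover $N$.
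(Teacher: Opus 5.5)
Your reduction matches the paper: quotients inherit the hypothesis, there is a unique nonsolvable minimal normal subgroup $N=T_1\times\cdots\times T_k$, $G$ is primitive with socle $N$, and $k=1$ is settled by \cite{TT}. Your idea for $k\ge 2$ is also exactly the paper's construction: lift a Tikhonenko--Tyutyanov pair $X,Y$ of the component group to $N_G(X_0^k)$ and $N_G(Y_0^k)$.

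But $k\ge 2$ is the real content of the theorem, and there the proposal stops at a plan. You do not prove that $N_G(X_0^k)$ is maximal, that the two normalizers are nonconjugate, or that their product is proper. You also never verify $X_0=X\cap T\ne 1$; if $X_0=1$ then $N_G(X_0^k)=G$ and the construction is empty.

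Your fallbacks do not fill this gap.
\begin{itemize}
\item Passing to arbitrary maximal overgroups loses the coordinate structure and any control of conjugacy. The inequality $|G|\le |M_1||M_2|/|M_1\cap M_2|$ cannot be contradicted without knowing the orders of these unknown overgroups and of their intersection.
\item The diagonal idea fails. In $X\wr P$ the normalizer of the standard diagonal $D$ is $\{(x,\dots,x):x\in X\}\times P$. So, for instance, $N\,N_G(D)\ne G$ when $G=X\wr P$ with $X\ne T$. Then $N_G(D)$ may lie in a maximal subgroup containing $N$, and such a subgroup has product $G$ with every supplement of $N$.
\item The bound $|G:N|\le|\mathrm{Out}(T)|^k k!$ bounds $|N_G(D)|$, not the order of a maximal overgroup.
\item Sylow-normalizer overgroups give no handle on $M_1M_2$ either.
\end{itemize}

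What is missing is the paper's structural argument. In its notation ($X$ the component group, $R=N$), Proposition~\ref{prop:rigid-component} provides maximal $U,V<X$ with:
\begin{itemize}
\item $K=U\cap T$ and $L=V\cap T$ both nontrivial;
\item $X=TU=TV$ and $UV\ne X$.
\end{itemize}
Then $U=N_X(K)$, and maximality of $U$ makes $K$ maximal among proper $U$-invariant subgroups of $T$.

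Embedding $G\le X\wr P$ gives $A=N_G(K^k)=G\cap(U^k\rtimes P)$ and $A\cap R=K^k$. Also $G=RA$, by writing each coordinate as $x_i=t_iu_i$; no invariance hypothesis is needed.

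\emph{Maximality.} Let $A<J\le G$. Each projection of $J\cap R$ contains $K$ and is invariant under the group $U$ induced by $N_A(T_i)$, so it is $K$ or $T$. In the first case $J\cap R=K^k\normal J$, forcing $J\le A$. In the second case, because $K\ne 1$, $J\cap T_i$ is a nontrivial normal subgroup of $T_i$, so $J\ge RA=G$.

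\emph{Nonconjugacy.} If $A^g=B$, then $K$ and $L$, and hence $U$ and $V$, would be conjugate in $X$.

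\emph{Proper product.} $AB\ne G$ comes from coordinates, not orders. Every element of $AB$ has all base coordinates in $UV$. An element of $N_G(T_1)$ inducing some $x\in X\setminus UV$ on $T_1$ has first coordinate $x$, so it is not in $AB$.
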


By Theorem~\ref{thm:main}, we obtain the following corollary.

\begin{corollary}\label{cor:kourovka}
There is no finite nonsolvable group which is the product of every
two of its nonconjugate maximal subgroups. Thus Problem 10.34 of the
Kourovka Notebook has a negative answer.
\end{corollary}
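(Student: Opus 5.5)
Our plan is a minimal counterexample argument that reduces the theorem to a primitive group, and then to an almost simple one, where the Tikhonenko--Tyutyanov result \cite{TT} applies. The first step is to check that the property passes to quotients. If $K\normal G$ and $M/K$, $N/K$ are nonconjugate maximal subgroups of $G/K$, then $M$ and $N$ are maximal in $G$. They are nonconjugate in $G$, because any conjugating element would induce a conjugation in $G/K$. Hence $MN=G$, and therefore $(M/K)(N/K)=G/K$.

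Now let $G$ be a nonsolvable counterexample of minimal order. If $G$ had two distinct minimal normal subgroups $K_1,K_2$, then $G/K_1$ and $G/K_2$ would be solvable by minimality. Since $G$ embeds in $G/K_1\times G/K_2$, $G$ would be solvable, a contradiction. So $G$ has a unique minimal normal subgroup $K$ and $G/K$ is solvable. It follows that $K$ is nonsolvable, so $K\cong T^k$ with $T$ nonabelian simple. The same argument applied to $\Phi(G)$ shows that $\Phi(G)=1$: if $\Phi(G)\ne1$, then $G/\Phi(G)$ is solvable and hence so is $G$. Consequently some maximal subgroup $M$ satisfies $K\not\le M$, and $G$ is a primitive group with socle $K$ and point stabilizer $M$.

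Next we exploit the factorization property. For every maximal subgroup $N$ that is not conjugate to $M$, we have $G=MN$. In particular, $N$ is transitive on the coset space $G/M$. Every maximal subgroup $N$ containing $K$ is such an $N$, and $G=MN$ holds automatically for these. The substantive information comes from maximal subgroups $N$ with $K\not\le N$ that are not conjugate to $M$.

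We would split according to the O'Nan--Scott type of $G$.
\begin{enumerate}[label=(\roman*)]
\item \emph{Almost simple case} ($k=1$). Then $G$ is almost simple and \cite{TT} gives the contradiction directly.
\item \emph{Case $k\ge2$.} Here we want to exhibit two nonconjugate maximal subgroups with a proper product. In the product action and wreath-type cases, the natural candidates are the normalizer of a diagonal subgroup, or $N_G(R^k)$ for a suitable $\Aut(T)$-invariant class of maximal subgroups $R$ of $T$, compared against the normalizer of a different class. A counting argument, $|M||N|/|M\cap N|<|G|$, should show that such a pair cannot factorize $G$. For example, $N_G(P^k)$ with $P$ a Sylow $2$-subgroup normalizer does not factorize $G$ together with a diagonal-type subgroup.
\end{enumerate}

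An alternative and cleaner route is to pass to $H=N_G(T_1)/C_G(T_1)$, which is almost simple with socle $T$. The goal is to lift a non-factorizing pair of nonconjugate maximal subgroups of $H$, guaranteed by \cite{TT}, to $G$, for instance via $\Gamma$-invariant subgroups $\prod_i R_i\,N_G(\ldots)$ built with Frattini-argument normalizers.

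The main obstacle is this reduction from $k\ge2$ (the non-almost-simple primitive types) to the almost simple case. One must ensure that the lifted subgroups are genuinely maximal in $G$, are not conjugate to each other, and still have a product distinct from $G$. Maximality can fail when conjugacy classes of maximal subgroups of $T$ are not $\Aut(T)$-stable. We would handle this by choosing classes stable under $\Aut(T)$, for example normalizers of Sylow subgroups or of characteristic subgroups. Then $N_G(R^k)$ is controlled by the Frattini argument $G=K\,N_G(R^k)$, and the factorization $G=MN$ would force a factorization of $T$ that the classification of maximal factorizations of simple groups rules out.
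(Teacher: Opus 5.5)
\textbf{Verdict: the proposal has a genuine gap.} Your reduction matches the paper's. That means closure of the property under quotients, a minimal counterexample with a unique minimal normal subgroup $R\cong T^k$ and $\Phi(G)=1$, primitivity on the cosets of a maximal subgroup not containing $R$, and the case $k=1$ via \cite{TT}.

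The gap is the case $k\geq 2$. You correctly flag it as the main obstacle, but you never show any candidate pair to be maximal, nonconjugate and non-factorizing.
\begin{itemize}
\item The inequality $|M||N|/|M\cap N|<|G|$ is only said to ``should'' hold.
\item The example of $N_G(P^k)$ against a diagonal normalizer is unjustified, because neither subgroup need be maximal. For instance, $\mathrm{diag}(T)\times P$ is not maximal in $T\wr P$ when $P$ is imprimitive, since block systems of $P$ give intermediate diagonals.
\item Your closing mechanism points the wrong way. What $G=AB$ would force is a factorization $X=UV$ of the component group $X$ induced by $N_G(T_1)$ on $T_1$. It does not force a factorization of $T$ excluded by the classification, and many simple groups do have maximal factorizations. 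It is \cite{TT}, applied to $X$, that excludes $X=UV$.
\item Restricting to $\Aut(T)$-stable classes of maximal subgroups of $T$ is the wrong invariant. The pair from \cite{TT} lives in $X$, and its intersections with $T$ need not be maximal in $T$. Your restriction can also leave no pair at all: for $T=A_6$ the only $\Aut(T)$-stable class of maximal subgroups is $3^2{:}4$.
\end{itemize}

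The missing construction is Theorem~\ref{thm:product-socle}.
\begin{enumerate}
\item Embed $G\leq X\wr P$ with $R=T^k$ as base group.
\item Take nonconjugate maximal $U,V<X$ with $X=TU=TV$ and $UV\neq X$. Arrange that $K=U\cap T$ and $L=V\cap T$ are nontrivial; this needs the case check of Proposition~\ref{prop:rigid-component}, which you never address.
\item Then $U=N_X(K)$, and by Dedekind's law $K$ is maximal among proper $U$-invariant subgroups of $T$.
\item Put $A=N_G(K^k)$ and $B=N_G(L^k)$. In wreath coordinates, $A=G\cap(U^k\rtimes P)$, $G=RA$ (from $X=TU$), and $A\cap R=K^k$.
\item Maximality: suppose $A<J\leq G$. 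The projections of $J\cap R$ lie between $K$ and $T$ and are invariant under the group $U$ induced by $N_A(T_i)$. Either they all equal $K$, giving $J\leq A$. Or they all equal $T$, giving $J\cap R=R$ (this uses $K\neq 1$) and hence $J\geq RA=G$.
\item Nonconjugacy of $A$ and $B$ follows because $K$ and $L$ are not $X$-conjugate.
\item $AB\neq G$: every element of $AB\subseteq (UV)^kP$ has all base coordinates in $UV$. But some $g\in N_G(T_1)$ has first coordinate in $X\setminus UV$.
\end{enumerate}

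Without this argument, or an equivalent one, your proof only handles counterexamples with simple socle.
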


There is a closely related problem of W.B. Guo on the generalized permutability of subgroups and the product of maximal subgroups in \cite{Guo2},
which can be formulated as follows. Let $A$ and $B$ be subgroups
of a group $G$. Then $A$ is said to be permutable with $B$ if
$AB=BA$. It may happen that $AB\neq BA$, but $AB^x=B^xA$ for some
element $x\in G$. For example, two conjugate maximal subgroups $M$
and $L$ of a group $G$ cannot permute in general, but we always
have that $ML^g=L^gM=M$ for some element $g\in G$. Based on this
observation, the following concepts about generalized permutable subgroups were introduced.  Let $X$ be a non-empty subset of a group $G$ and $A$ and $B$
be two subgroups of $G$. If $AB^x=B^xA$ for some $x \in X$, then
$A$ is said to be $X$-permutable with $B$; and if $X=G$, then $A$ is
also said to be conditionally permutable (or in brevity,
$c$-permutable) with $B$ in $G$ (see \cite[ Definition
2.1.1]{Guo2}). Clearly, any two conjugate subgroups are
$c$-permutable. Moreover, there are lots of $c$-permutable subgroups in nonabelian simple groups. For example, any element of order $2$ in the alternating group $A_5$ is $c$-permutable with all subgroups of $A_5$.  The influence of $c$-permutable subgroups on the structure of solvable groups is widely studied (see \cite[Chapter 2]{Guo2}). For example, a group $G$ is solvable if and only if every two Hall subgroups of $G$ are $c$-permutable in $G$ (see \cite[Lemma 5.1]{Guo2}). In this vein, the following problem was proposed by  Guo (see \cite[Problem 2.6.10]{Guo2}).

\begin{problem} \label{P'}
Is a group $G$ solvable if any two maximal subgroups of $G$ are
$c$-permutable in $G$?
\end{problem}

By our Theorem \ref{thm:main}, we can prove this is true.

\begin{theorem} \label{thm:main2}
A group $G$ is solvable if any two maximal subgroups are $c$-permutable in $G$. 
\end{theorem}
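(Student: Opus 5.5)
The plan is to reduce Theorem~\ref{thm:main2} to Theorem~\ref{thm:main}. We show that if any two maximal subgroups of $G$ are $c$-permutable, then $MN=G$ for every pair of nonconjugate maximal subgroups $M,N$ of $G$; Theorem~\ref{thm:main} then gives solvability.

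Let $M$ and $N$ be nonconjugate maximal subgroups of $G$. By hypothesis there is $x\in G$ with $MN^x=N^xM$, so $MN^x$ is a subgroup of $G$ containing $M$. By maximality of $M$, either $MN^x=M$ or $MN^x=G$.

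\emph{Case $MN^x=M$.} Then $N^x\le M$. Since $N^x$ is maximal and $M\ne G$, we get $N^x=M$. This contradicts the assumption that $M$ and $N$ are nonconjugate, so this case cannot occur.

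\emph{Case $MN^x=G$.} Write $x=x_1x_2$ with $x_1\in M$ and $x_2\in N$? That decomposition is not available in general, so we argue differently. We have $G=MN^x=Mx^{-1}Nx$, hence $Gx^{-1}=G=Mx^{-1}N$. Thus every element $g$ can be written as $g=m x^{-1} n$. Taking $g=x^{-1}$ gives $x^{-1}=m x^{-1} n$ for some $m\in M$, $n\in N$. It is cleaner to use the standard fact that $MN^x=G$ implies $|M||N|/|M\cap N^x|=|G|$, and then pass to double cosets: $MN^x=G$ says that $Mx^{-1}N=G$, i.e. the $(M,N)$-double coset $Mx^{-1}N$ is all of $G$. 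Hence there is only one $(M,N)$-double coset. Since $1\in G=Mx^{-1}N$, we get $MN=M1N=Mx^{-1}N=G$.

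So $MN=G$ for all nonconjugate maximal pairs, and Theorem~\ref{thm:main} applies. There is no real obstacle here: the only care needed is the double coset step converting $MN^x=G$ into $MN=G$, and ruling out the degenerate case using nonconjugacy.
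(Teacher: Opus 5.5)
Your proposal is correct and follows the same route as the paper. You use $c$-permutability to make $MN^x$ a subgroup, rule out $MN^x=M$ by nonconjugacy, conclude $MN^x=G$, deduce $MN=G$, and invoke Theorem~\ref{thm:main}. The only difference is that you prove the step $MN^x=G\Rightarrow MN=G$ directly: from $1\in Mx^{-1}N$ you get $x^{-1}\in MN$, hence $Mx^{-1}N=MN$. The paper instead cites this step from the literature; in your write-up you should delete the abandoned false start and the unused order-formula remark.
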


The notation and terminologies in this paper are standard and the
reader is referred to \cite{Atlas, Guo2, LPS} if necessary. For a positive integer $n$, let $\pi(n)$ denote the set of prime divisors of $n$. Let $C_n$ denote a cyclic group of order $n$.  For a group $G$, denote by $|G|$ the order of $G$ and by $\pi(G)=\pi(|G|)$ the set of different primes dividing the order of $G$. We denote the socle of $G$ by $\Soc(G)$, which is the product of all minimal normal subgroups of $G$.  We say $G$ is almost simple if there exists a nonabelian simple group $T$ such that $T\leq G\leq \Aut(T)$. If $T$ is a simple Chevalley group, let $P_i$ denote the parabolic subgroup of $T$ obtained by deleting the $i$th node in the standard Dynkin diagram associated with $T$ (see \cite{LPS}). In
particular, other notation for simple groups mainly comes from
\cite{Atlas} and \cite{LPS}.

This paper is organized as follows. Section 2 shows that every almost simple group $X$ possesses a nonconjugate maximal subgroup pair $U$ and $V$ such that every maximal subgroup in this pair has nontrivial intersection with the socle of $X$. In Section 3, we prove a product-socle lifting result for a primitive permutation group with a unique nonsolvable minimal normal subgroup. In Section 4, we present the proof of main results.

\section{Preliminaries}

For a group $G$, a  \emph{maximal factorization} of $G$ is a
factorization $G=AB$ in which $A$ and $B$ are maximal subgroups of
$G$ (see \cite{LX}). If $T=\Soc(G)$, it is interesting to study
maximal factorizations of an almost simple group $G$ whose two factors do not contain $T$. In this case, the factors in the factorization are core-free and such a factorization is said to be nontrivial (see \cite{LX}). 
The maximal factorizations of almost simple groups have been
extensively investigated and there are lots of important results on
this topic (see \cite{LX, LPS}).

For a given group $G$, let $\cP(G)$ denote the following property:

\begin{center} \emph{whenever $M$ and $N$ are nonconjugate maximal subgroups of
$G$, one has $MN=G$.}
\end{center}
\begin{lemma}\label{lem:quotient}
For a group $X$, the property $\cP(X)$ is inherited by quotients of
$X$.
\end{lemma}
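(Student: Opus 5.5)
We argue directly through the correspondence theorem. Let $K\normal X$, write $\bar X=X/K$, and let $\bar M=M/K$ and $\bar N=N/K$ be two nonconjugate maximal subgroups of $\bar X$. Our aim is to show that $\bar M\bar N=\bar X$.

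First, by the correspondence theorem, the full preimages $M$ and $N$ are subgroups of $X$ containing $K$. They are maximal in $X$, because the subgroups of $X$ containing $K$ correspond bijectively and inclusion-preservingly to the subgroups of $\bar X$, and every subgroup of $X$ lying above $M$ already contains $K$.

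Second, we check that $M$ and $N$ are nonconjugate in $X$. Suppose instead that $N=M^x$ for some $x\in X$. Since $K$ is normal and contained in $M$, the image of $M^x$ in $\bar X$ is $\bar M^{\bar x}$, where $\bar x=xK$. This would give $\bar N=\bar M^{\bar x}$, contradicting the choice of $\bar M$ and $\bar N$.

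Third, we apply $\cP(X)$ to obtain $MN=X$. Projecting this equality to $\bar X$ gives $\bar M\bar N=(MN)K/K=X/K=\bar X$. Hence $\cP(\bar X)$ holds.

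The only point needing care is the second step, namely that nonconjugacy downstairs lifts to nonconjugacy upstairs. This is the contrapositive of the observation that conjugation by $x$ commutes with the projection $X\to\bar X$. The remaining steps are routine.
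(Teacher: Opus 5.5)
Your proposal is correct and follows essentially the same argument as the paper: lift the nonconjugate maximal subgroups of $X/K$ to their full preimages, observe that these are maximal and nonconjugate in $X$, apply $\cP(X)$, and project back down. Your write-up simply spells out the correspondence-theorem and conjugation-compatibility details that the paper states in one line each.
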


\begin{proof}
Let $K$ be a normal subgroup of $X$, and let $U/K$ and $V/K$ be
nonconjugate maximal subgroups of $X/K$. Then $U$ and $V$ are
maximal subgroups of $X$. If they were conjugate in $X$, their
images would be conjugate in $X/K$. Hence $U$ and $V$ are
nonconjugate in $X$  and $\cP(X)$ gives $UV=X$. Therefore
\begin{equation*}
(U/K)(V/K)=X/K.
\end{equation*}
Thus $\cP(X/K)$ holds.
\end{proof}

\begin{lemma}\label{lem:sylow-replacement}
Let $T<X\leq\Aut(T)$, where $T$ is nonabelian simple. Suppose that
$A<X$ is maximal and satisfies
\begin{equation*}\label{eq:Aassumptions}
X=TA,\qquad 1<A\cap T<T,
\end{equation*}
and suppose that $A$ is not a factor in any nontrivial maximal factorization of
$X$. Then there exists a maximal subgroup $B<X$ such that
\begin{equation*}
X=TB,\qquad 1<B\cap T<T,
\end{equation*}
$A$ and $B$ are nonconjugate, and $AB\neq X$.
\end{lemma}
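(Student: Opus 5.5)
The plan is to produce $B$ by choosing a suitable maximal subgroup of $X$ containing the normalizer of a Sylow subgroup of $T$. The hypothesis on $A$ is what makes this work: since $A$ is not a factor of any nontrivial maximal factorization of $X$, every maximal $B$ with $X=TB$ and $1<B\cap T<T$ that is not conjugate to $A$ automatically satisfies $AB\neq X$. Indeed, such a $B$ is core-free, because its core in $X$ is normal in $X$ and, as $T$ is the unique minimal normal subgroup of $X$, it either contains $T$ or is trivial. Since $B\cap T<T$, the core is trivial, and likewise for $A$. So $A$ and $B$ are both core-free maximal subgroups, and $AB=X$ would be a nontrivial maximal factorization with $A$ as a factor. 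The task therefore reduces to finding a maximal $B$ with $X=TB$, $1<B\cap T<T$, and $B$ not conjugate to $A$.

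To build $B$, I would use a Frattini argument. Let $p$ be a prime dividing $|T|$ and $P\in\mathrm{Syl}_p(T)$. Then $X=TN_X(P)$. Since $T$ is nonabelian simple, $N_X(P)$ does not contain $T$: if it did, $P$ would be normal in $T$, which is impossible. Choose a maximal subgroup $B$ of $X$ containing $N_X(P)$. Then $X=TN_X(P)\subseteq TB$, so $X=TB$, and $B\cap T\geq P>1$. Also $B\cap T\neq T$, since otherwise $B\geq TN_X(P)=X$. Hence $B$ satisfies the required conditions for every choice of $p$.

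The remaining step, which I expect to be the main obstacle, is to choose $p$ so that $B$ is not conjugate to $A$. Since $A\cap T$ is a proper subgroup of $T$, the order $|A\cap T|$ is a proper divisor of $|T|$. I would first try to pick a prime $p$ with $|T|_p\nmid|A\cap T|$. If $B=A^x$, then $B\cap T=(A\cap T)^x$, and this has order $|A\cap T|$. But $B\cap T$ contains the full Sylow subgroup $P$, so $|P|$ would divide $|A\cap T|$, a contradiction. Such a prime exists whenever $A\cap T$ does not have order divisible by $|T|_p$ for every $p$. If $A\cap T$ does contain a full Sylow $p$-subgroup for every $p\in\pi(T)$, the argument must change. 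In that case I would pick two distinct primes $p,q$ and the corresponding maximal overgroups $B_p,B_q$ of $N_X(P)$ and $N_X(Q)$. If both were conjugate to $A$, I would compare them with $A$ via the index $|X:A|$ and orders. More robustly, if every maximal subgroup of $X$ not containing $T$ and containing a Sylow normalizer were conjugate to $A$, then $A$ would contain a conjugate of $N_X(P)$ for every $p$. I would then derive a contradiction from known results: a proper subgroup of a simple group cannot contain Sylow subgroups for all primes with the corresponding normalizers, or one can appeal to the classification of subgroups of $T$ whose order is divisible by $|T|$'s full prime-power parts. Alternatively, in that situation one can take $B$ to be a maximal overgroup of $N_X(R)$ for some other solvable-type subgroup $R$ not conjugate into $A$. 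This case analysis is where the real difficulty lies.
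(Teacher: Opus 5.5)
Your construction is exactly the paper's: the Frattini argument $X=TN_X(P)$, $B$ a maximal overgroup of $N_X(P)$, nonconjugacy via $|B\cap T|_p=|T|_p>|A\cap T|_p$, and $AB\neq X$ from the hypothesis on $A$. You also check explicitly that $A$ and $B$ are core-free (because $T$ is the unique minimal normal subgroup of $X$), which the paper leaves implicit.

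The only problem is self-inflicted: the case you single out as ``where the real difficulty lies'' cannot occur. If $|T|_p$ divided $|A\cap T|$ for every $p\in\pi(T)$, then, since these prime powers are pairwise coprime, their product $|T|$ would divide $|A\cap T|$. That forces $A\cap T=T$, contrary to hypothesis. Put differently, you already observed that $|A\cap T|$ is a proper divisor of $|T|$, so some prime $p$ has $|A\cap T|_p<|T|_p$; this is precisely the paper's first line.

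Your first argument therefore proves the lemma in full. The entire last paragraph should be deleted: comparing $B_p$ and $B_q$, appealing to classification results, and taking overgroups of other normalizers. As written it is only a list of vague suggestions. Moreover, the ``classification of subgroups of $T$ whose order is divisible by all full prime-power parts of $|T|$'' that it invokes is trivial, since the only such subgroup is $T$ itself.
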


\begin{proof}
Put $K=A\cap T$. Since $K<T$, there is a prime $p$ such that
\begin{equation*}
|K|_p<|T|_p.
\end{equation*}
Let $P$ be a Sylow $p$-subgroup of $T$. By the Frattini argument,
\begin{equation*}
X=T N_X(P).
\end{equation*}
By our assumptions, it is obvious that the subgroup $N_X(P)$ is
proper. Choose a maximal subgroup $B<X$ containing $N_X(P)$. Since
$X=T N_X(P)$, we have $X=TB$. Also
\begin{equation*}
P\leq B\cap T\leq T,
\end{equation*}
so $B\cap T$ contains a Sylow $p$-subgroup of $T$. In particular,
\begin{equation*}
|B\cap T|_p=|T|_p>|A\cap T|_p.
\end{equation*}
Thus $A$ and $B$ are not conjugate. Finally, $AB\neq X$, because $A$
is not a factor in  any nontrivial maximal factorization of $X$ by the
hypothesis.
\end{proof}

\begin{corollary}\label{cor:no-factorization}
Let $T<X\leq\Aut(T)$ with $T$ a nonabelian simple group. Suppose
that $X$ has no nontrivial maximal factorization. Then there exist nonconjugate
maximal subgroups $A,B$ of $T$ such that
\begin{equation*}
1<A\cap T<T,\qquad 1<B\cap T<T,
\qquad AB\neq X.
\end{equation*}
\end{corollary}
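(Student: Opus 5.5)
The plan is to derive the corollary from Lemma~\ref{lem:sylow-replacement}. (The statement says ``maximal subgroups of $T$'', but the conditions $1<A\cap T<T$ and $AB\neq X$ show that $A,B$ are meant to be maximal subgroups of $X$; we prove it in that form.) Lemma~\ref{lem:sylow-replacement} needs only one input: a maximal subgroup $A<X$ with
\begin{equation*}
X=TA,\qquad 1<A\cap T<T.
\end{equation*}
Since $X$ has no nontrivial maximal factorization, such an $A$ is automatically not a factor in any nontrivial maximal factorization of $X$. The lemma then supplies a maximal subgroup $B$ with $X=TB$, $1<B\cap T<T$, $B$ not conjugate to $A$, and $AB\neq X$. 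This is exactly the required conclusion.

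So the work is to produce $A$. I would use the Frattini argument, as in the proof of Lemma~\ref{lem:sylow-replacement}.

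\textbf{Choosing a prime.} Take any prime $p$ dividing $|T|$ and a Sylow $p$-subgroup $P$ of $T$. Then $X=TN_X(P)$.

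\textbf{Properness.} $N_X(P)$ is proper in $X$. Otherwise $P\normal X$, so $P\normal T$, and since $1<P$ this would give $P=T$, making $T$ a $p$-group and contradicting the fact that $T$ is nonabelian simple.

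\textbf{Taking $A$.} Let $A$ be a maximal subgroup of $X$ containing $N_X(P)$. Then $X=TA$ because $X=TN_X(P)$. Also $A\cap T\geq P>1$.

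\textbf{The upper bound $A\cap T<T$.} If $A\cap T=T$, then $T\leq A$, and hence $X=TA=A$. This contradicts the maximality, and hence properness, of $A$.

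Thus $A$ satisfies all the hypotheses of Lemma~\ref{lem:sylow-replacement}, and the corollary follows.

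There is no real obstacle here. The one point needing care is that $A$ must be core-free modulo $T$, in the sense that $A$ neither contains $T$ nor meets it trivially. Both properties come for free from the Frattini construction. The hypothesis $T<X$ is not needed; if $X=T$, the same argument goes through verbatim.
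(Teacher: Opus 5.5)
Your proposal is correct and is essentially the paper's own proof: a Frattini argument on a nontrivial Sylow subgroup of $T$ produces a maximal subgroup $A$ of $X$ with $X=TA$ and $1<A\cap T<T$, and Lemma~\ref{lem:sylow-replacement} then applies because the factorization hypothesis holds vacuously. Your explicit checks that $N_X(P)$ is proper and that $T\not\leq A$, and your reading of ``maximal subgroups of $T$'' as a typo for ``of $X$'', simply fill in details the paper leaves implicit.
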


\begin{proof}
Choose a nontrivial Sylow subgroup $Q<T$. By the Frattini argument,
$X=T N_X(Q)$. Let $A$ be a maximal subgroup containing $N_X(Q)$.
Then $X=TA$ and
\begin{equation*}
1<Q\leq A\cap T<T.
\end{equation*}
Since $X$ has no nontrivial maximal factorization, the remaining assertions follow from
 Lemma~\ref{lem:sylow-replacement}.
\end{proof}

\begin{lemma}\label{lem:small-outer-auto}
Let $T<X\leq\Aut(T)$, where $T$ is nonabelian simple. Suppose that
$A<X$ is maximal such that $T$ is not contained in $A$. If $|X:T|<|A|$, then $A\cap T\neq 1$.
\end{lemma}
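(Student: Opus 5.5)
The plan is to argue by contradiction. Suppose $A\cap T=1$. Since $T\normal X$, the subgroup $AT$ lies between $A$ and $X$. Maximality of $A$ together with $T\not\leq A$ forces $AT=X$.

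The key step is the second isomorphism theorem applied to $A$ and $T$:
\begin{equation*}
X/T = AT/T \cong A/(A\cap T) = A/1 \cong A.
\end{equation*}
Taking orders gives $|X:T|=|A|$, which contradicts the hypothesis $|X:T|<|A|$. Hence $A\cap T\neq 1$.

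Two details need a brief check, and neither is a real obstacle. First, $AT$ is a subgroup precisely because $T$ is normal. Second, $A<AT$ follows from $T\not\leq A$, so maximality yields $AT=X$ rather than $AT=A$. Apart from these, the statement is a direct counting argument and I expect no further difficulty.
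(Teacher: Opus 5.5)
Your proof is correct and is essentially the paper's argument. The paper uses $X=TA$ and the product formula $|X|=|T||A|/|A\cap T|$ to get $|A|=|X:T|\,|A\cap T|$, which is the same count as your second-isomorphism step, only stated directly rather than by contradiction.
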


\begin{proof}
Since $T\nleq A$ by the hypothesis, we have $X=TA$. It follows that $$|X|=\frac{|T||A|}{|A\cap T|},$$ and so $|A|=|X:T||A\cap T|$.  By $|X:T|<|A|$, we see that $A\cap T$ is a nontrivial subgroup of $T$, as desired.
\end{proof}

The next proposition is the precise refinement of the almost simple case
needed later.

\begin{proposition} \label{prop:rigid-component}
Let $T$ be a nonabelian simple group and let
\begin{equation*}
T\leq X\leq\Aut(T).
\end{equation*}
There exist nonconjugate maximal subgroups $U,V<X$ such that, with
\begin{equation*}
K=U\cap T,\qquad L=V\cap T,
\end{equation*}
we have 
\begin{align}
X&=TU=TV, \label{eq:supplements}\\
X&\neq UV, \label{eq:proper-component-product}\\
1&<K<T,\qquad 1<L<T, \label{eq:nontrivial-intersections}\\
U&=N_X(K),\qquad V=N_X(L). \label{eq:normalizers}
\end{align}
Furthermore, $K$ is maximal among the proper $U$-invariant subgroups
of $T$, $L$ is maximal among the proper $V$-invariant subgroups of
$T$, and $K$ and $L$ are not $X$-conjugate.
\end{proposition}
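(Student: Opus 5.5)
The plan is to split the statement into an existence part and a formal part. The existence part asks for nonconjugate maximal $U,V<X$ satisfying \eqref{eq:supplements}, \eqref{eq:proper-component-product} and \eqref{eq:nontrivial-intersections}. Everything else should then follow from maximality alone.

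\emph{Formal part.} Let $U$ be maximal with $X=TU$ and $1<K=U\cap T<T$.
\begin{enumerate}[label=(\roman*)]
\item Since $T\normal X$, the group $K$ is normal in $U$, so $U\le N_X(K)$.
\item Since $T$ is simple and $1<K<T$, $K$ is not normal in $T$, hence $N_X(K)\ne X$. Maximality of $U$ then gives $U=N_X(K)$, which is \eqref{eq:normalizers}.
\item Suppose $K<H<T$ with $H$ a $U$-invariant subgroup. Then $UH$ is a subgroup, and by Dedekind $UH\cap T=H(U\cap T)=H$. This intersection is neither $K$ nor $T$, so $U<UH<X$, contradicting maximality. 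Hence $K$ is maximal among proper $U$-invariant subgroups of $T$.
\item If $L=K^x$ for some $x\in X$, then $V=N_X(L)=N_X(K)^x=U^x$, contradicting nonconjugacy of $U$ and $V$. So $K$ and $L$ are not $X$-conjugate.
\end{enumerate}
The same arguments apply to $V$ and $L$.

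\emph{Existence part.} First suppose $X$ has no nontrivial maximal factorization. Then Corollary~\ref{cor:no-factorization} gives the pair directly. Its maximal subgroups contain Sylow normalizers, so they supplement $T$ and are core-free.

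Otherwise I will use Lemma~\ref{lem:sylow-replacement}. The task becomes finding one maximal $A<X$ with $X=TA$ and $1<A\cap T<T$ such that $A$ is not a factor of any nontrivial maximal factorization of $X$.
\begin{itemize}
\item To get $A\cap T\ne1$ cheaply, I would use Lemma~\ref{lem:small-outer-auto}. It suffices that $|A|>|X:T|$, which holds for essentially every maximal subgroup one writes down, since $|\mathrm{Out}(T)|$ is tiny compared with the orders of maximal subgroups.
\item To choose $A$, I would go through the Liebeck--Praeger--Saxl classification of maximal factorizations of almost simple groups, family by family, and pick a maximal subgroup of $X$ not containing $T$ that is absent from the factor lists.
\end{itemize}
Concretely, the family-by-family choices are as follows.
\begin{itemize}
\item \emph{Alternating groups $A_n$.} The factorizations involve $A_{n-1}$, $A_{n-k}\times A_k$-type subgroups and a few transitive subgroups. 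Take $A$ to be a suitable intransitive $(S_k\times S_{n-k})\cap X$ with $2\le k<n/2$ not occurring as a factor, or an imprimitive wreath product, checked against the tables; small $n$ are handled by hand.
\item \emph{Sporadic groups.} Only a few have factorizations, and each can be read off the Atlas.
\item \emph{Exceptional groups.} Nontrivial factorizations occur only for $G_2(3^f)$, $F_4(2^f)$ and $G_2(4)$. For these, a maximal torus normalizer or another parabolic-free subgroup can serve as $A$.
\item \emph{Classical groups.} Take $A$ to be the normalizer of a maximal torus, or of a Sylow $r$-subgroup for a primitive prime divisor $r$ not dividing the factors, or a $\mathcal C_2$ or $\mathcal C_6$ subgroup. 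These typically do not appear in the LPS tables.
\end{itemize}
Lemma~\ref{lem:sylow-replacement} then produces $B$, and $U=A$, $V=B$ finish the argument.

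The main obstacle is this classical-group verification. One must confirm, uniformly in the field size and the graph/field automorphisms present in $X$, that the chosen $A$ is maximal in $X$, supplements $T$, and appears in no row of the LPS tables (including the exceptional rows for small $q$ and for $\PSp_4(2^f)$, $\Omega_8^+(q)$, and the like). I would first try a single uniform choice, namely the normalizer of a Singer-type torus for odd-dimensional linear groups and a suitable $\mathcal C_2$ subgroup otherwise. The finitely many small exceptions would be handled by direct Atlas inspection.
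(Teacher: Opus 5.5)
Your formal part is correct and matches how the paper concludes. You also supply the justification $N_X(K)\neq X$, which the paper leaves implicit.

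The gap is in the existence part. The paper does not build the pair from Lemma~\ref{lem:sylow-replacement} in every case. It starts from the Tikhonenko--Tyutyanov theorem, which already gives nonconjugate maximal $U,V$ with $X=TU=TV$ and $UV\neq X$ for every almost simple $X$. Its case analysis is then only needed to get $U\cap T\neq 1\neq V\cap T$ when $X>T$.

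Without that theorem, your reduction to ``one maximal $A$ lying in no nontrivial maximal factorization'' is not always available:
\begin{itemize}
\item In $L_2(7)\cong L_3(2)$, each $S_4$ is a point or line stabilizer of the Fano plane and $7{:}3$ is transitive, so every maximal subgroup is a factor.
\item In $L_2(11)$, the maximal subgroups $A_4$ and $D_{12}$ have order $12$, which is coprime to $|11{:}5|=55$ with $12\cdot 55=|L_2(11)|$. Both classes of $A_5$ are point stabilizers of degree-$11$ actions on which $11{:}5$ is transitive. So again every maximal subgroup is a factor.
\item The same happens in $A_6$, $S_5$ and $S_6$.
\end{itemize}
For such $X$, Lemma~\ref{lem:sylow-replacement} cannot be used at all. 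You must exhibit a non-factorizing pair directly, e.g.\ $A_4$ and $D_{12}$ in $L_2(11)$. The paper takes such pairs from Tikhonenko--Tyutyanov's tables, and your ``by hand'' clauses do not account for the fact that your stated method is impossible there.

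The more serious gap is the classical case. You call it the main obstacle yourself, it is left unproved, and the one uniform choice you name fails.
\begin{itemize}
\item For $n$ odd and composite, the normalizer of a Singer cycle in $L_n(q)$ is not maximal. It lies in a $\mathcal C_3$-subgroup of type $GL_{n/b}(q^b).b$. That subgroup contains $SL_{n/b}(q^b)$, hence is transitive on $1$-spaces, and is therefore itself a factor with $P_1$.
\item For $n$ prime with $\gcd(n,q-1)=1$ (e.g.\ $31{:}5<L_5(2)$), the Singer subgroup of $T$ is regular on $1$-spaces. So $T=P_1\,N_T(\text{Singer})$ is a nontrivial maximal factorization.
\end{itemize}
The paper instead chooses subgroups that the Liebeck--Praeger--Saxl tables exclude as factors:
\begin{itemize}
\item $N_X(P_2)$ for $L_n(q)$ with $n\ge 4$ and for the symplectic and orthogonal families;
\item $N_X(P_1)$ for even-dimensional unitary groups and for $G_2(3^a)$, $G_2(4)$, $F_4(2^a)$;
\item the split-torus normalizer of type $GL_1(q)\wr S_3$ for $L_3(q)$.
\end{itemize}
It then applies Lemma~\ref{lem:sylow-replacement}, and handles the remaining small groups with Tikhonenko--Tyutyanov's pairs and Lemma~\ref{lem:small-outer-auto}.

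As written, your proposal establishes the formal consequences but not the existence of $U$ and $V$ for infinitely many classical groups.
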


\begin{proof}
By the main theorem in \cite{TT}, we have that there exists nonconjugate
maximal subgroups $U$ and $V$ such that $X\neq UV$ and $X=TU=TV$
since both $U$ and $V$ are not contained in $T$. Thus
\eqref{eq:supplements} and \eqref{eq:proper-component-product}
 are valid. Suppose first that both $K$ and $L$ are nontrivial proper subgroups of $T$.  Since $T\normal X$, we have $K\normal U$, and hence
\begin{equation*}
U\leq N_X(K).
\end{equation*}
  Since $U$ is maximal in
$X$, it follows that
\begin{equation*}
N_X(K)=U.
\end{equation*}
The same argument gives $N_X(L)=V$. Hence assertion \eqref{eq:normalizers} holds.

 From now on, we aim to prove assertion \eqref{eq:nontrivial-intersections}. If $X=T$, then claim
\eqref{eq:nontrivial-intersections}  is 
clear by setting $K=U$ and $L=V$.  
Next   we assume that $T<X$ and prove that $K$ and $L$ are nontrivial
subgroups, where $K=U\cap T$ and $L=V\cap T$. We proceed by case-by-case analysis by specifying the choices of nonconjugate maximal subgroups of $U$ and $V$. 

\begin{itemize}
\item Suppose that $T$ is a sporadic simple group. By \cite{Atlas}, we see that a sporadic simple group $T$ with nontrivial outer automorphism is as follows:
\begin{equation*}
M_{12}, M_{22}, J_2, J_3, Suz, HS, McL, He, Fi_{22}, Fi_{24}',   HN, O'N.
\end{equation*}
By \cite[Table 6]{LPS}, if $T=J_2, McL, Fi_{24}', HN, O'N, J_3$, then $X$ has no maximal factorization and 
Corollary~\ref{cor:no-factorization} applies so that the nonconjugate maximal subgroup pair $U$ and $V$ exists satisfying the condition of \eqref{eq:nontrivial-intersections}. If $T=M_{12}, M_{22}, Suz, HS, He, Fi_{22}$, then \cite[Table 6]{LPS} implies that $X$ has nontrivial maximal factorization, but \cite[Table 1]{TT} provides the nonconjugate maximal subgroups $U$ and $V$ of $X$ with $X\neq  UV$.   In this case, since $X=TU=TV$
for maximal subgroups $U$ and $V$ not containing $T$,  $|U|>|X:T|=2$ and $|V|>|X:T|=2$ by \cite{Atlas}, it follows from Lemma \ref{lem:small-outer-auto} that $U\cap T\neq 1\neq V\cap T$.

\item Suppose that $T=A_n$ with $n\geq 5$.  Let $X$ act on the set  $\Omega=\{1, 2, \ldots, n\}$.  If $n\neq 6$, then the point stabilizer $U$ of $\{1\}$ and
the two-set stabilizer $V$ of $\{1, 2\}$ are $S_{n-1}\cap X$ and $(S_{n-2}\times S_2)\cap X$ respectively.  By \cite[Theorem 2.4]{Wilson}, $U$ and $V$ are nonconjugate maximal subgroups of $X$. It is easy to see that $A_{n-2}\leq U\cap T$ and $A_{n-2}\leq V\cap T$. Therefore 
$U$ and $V$ have   nontrivial intersections with $T=A_n$. If $n=6$, then $A_6\simeq L_2(9)$ and this will be considered in the following cases.

\item Suppose $T=L_n(q)$.

 (I) Assume that $n=2$. If $q$ is even with $q\geq 8$ or $q$ is odd with $q>11$, then we choose different primes $s\in \pi(q-1)$ and $r\in
\pi(q+1)$ and let $S$ be a Sylow $s$-subgroup of $T$ and $R$ be a Sylow
$r$-subgroup of $T$. Set $U=N_X(S)$ and $V=N_X(R)$. Then by \cite[p.175]{TT}, we obtain that $U$ and $V$ are nonconjugate
maximal subgroups of $X$ such that $X=TU=TV$ and $X\neq UV$. By the
construction of $U$ and $V$, we see that $S\leq U\cap T\neq 1$ and $R\leq V\cap
T\neq 1$. 

Since $L_2(4)\simeq L_2(5)\simeq A_5$, we consider $q=7, 9, 11$ in the following.

If $T=L_2(7)$, then $X=L_2(7).2$. Take $U=D_{12}$ and $V=D_{16}$. Then $U$ and $V$ are nonconjugate maximal core-free subgroups of $X$ such that $X\neq UV$ (\cite[Table 2]{TT}).  Since $|X:T|=2$, $3\in \pi(U\cap T)$   and so $U\cap T\neq 1$. Also since $|V|>|X:T|=2$, $V\cap T\neq 1$. 

If $T=L_2(9)$, then the nonconjugate core-free maximal subgroup pair $U$ and $V$ such that $X\neq UV$ can be taken as follows (see \cite{Atlas}):

(i) when $X=L_2(9).2_1\simeq S_6$, $U=S_5$ and $V=S_4\times S_2$ as in the alternating group case;

(ii) when $X=L_2(9).2_2$, $U=D_{20}$ and $V=D_{16}$;

(iii) when $X=L_2(9).2_3$, $U=C_5:C_4$ and $V=C_8:C_2$;

(iv) when $X=L_2(9).2^2$, $U=C_{10}:C_4$ and $V$ is a Sylow $2$-subgroup of $X$.

In cases (ii)-(iv), since $|X:T|=2$ or $4$, $5\in \pi(U)\cap \pi(T)$, and $2^4||V|$, we get that $U\cap T\neq 1$ because an element of $U$ of order $5$ is contained in $T$,  and $V\cap T\neq 1$ by Lemma \ref{lem:small-outer-auto}.

If $T=L_2(11)$  and $X=L_2(11).2$, then let $U=S_4$ and $V=D_{24}$. Then $U$ and $V$ are nonconjugate maximal subgroups of $X$ such that $X\neq UV$ because $5\notin \pi(U)\cup \pi(V)$ (see \cite{Atlas}). Also, the Sylow $3$-subgroups of $U$ and $V$ are contained in $T$ as $|X:T|=2$.

 (II) Assume that $n=3$. Since $L_3(2)\simeq L_2(7)$,  we assume that $q\geq 3$. Choose a subgroup $H$ of $T$ with
\begin{equation*}
H=((C_{q-1}\times C_{q-1})/C_d)\mathbin{.}S_3, \qquad d=(3,q-1).
\end{equation*}
Let $U=N_X(H)$. Then,  by \cite[p.175]{TT},  $U$ is a maximal subgroup of $X$ and $U$ is absent from every nontrivial maximal factorization
of $X$. Thus $U\cap T\neq 1$. Now by 
Lemma~\ref{lem:sylow-replacement}, we can take the desired nonconjugate maximal subgroup $V$ of $X$ such that $V\cap T\neq 1$.

 (III) Assume that $n\geq 4$. 

 (i) Suppose first that $n$ is even. Since $L_4(2)\simeq A_8$ has been discussed, we let $q\geq 3$. Let $P_2$ be the parabolic subgroup of $L_n(q)$ obtained by deleting the second  node in the standard Dynkin diagram. Then by \cite[Table 1]{LPS}, $U=N_X(P_2)$ is a maximal subgroup of $X$ with $U\cap T=P_2\neq 1$ and is not a factor of every nontrivial maximal factorization of $X$.  It follows from
Lemma~\ref{lem:sylow-replacement} that the nonconjugate maximal
subgroup $V$ with required conditions exists. 

 (ii) Suppose next  that $n$ is odd. If $q=2$, then by \cite[Table 3]{LPS} and  \cite[p.70]{Atlas}, we can take $U=L_4(2).C_2$ and $V=C_{31}.C_{10}$ such that $U$ and $V$ are nonconjugate maximal subgroups of $X$ with $X\neq UV$ and $U\cap T\neq 1\neq V\cap T$. If $q\geq 3$, then by \cite[Table 1]{LPS}, either $X$ has no nontrivial maximal factorization or the maximal subgroup  $U=N_X(P_2)$ is not a factor in any nontrivial  maximal factorization of $X$. In both cases, we apply Lemma~\ref{lem:sylow-replacement} and  Corollary~\ref{cor:no-factorization}  to get the conclusion.

\item Suppose $T=\PSp_{2n}(q)$ with $n\geq 2$. 

 (I) Assume that $q$ is odd.  Assume first that  $T\neq \PSp_4(3)$ and $\PSp_6(3)$. By \cite[Table 1]{LPS}, one can take $U=N_X(\PSp_{2a}(q^b).b)$ and $V=N_X(P_2)$, where $n=ab$ and $b$ is a prime. Then $U$ and $V$ are nonconjugate core-free  maximal subgroups of $X$ and  $X\neq UV$. By the choice of $U$ and $V$, we see that $U\cap T\neq 1\neq V\cap T$.

 If $T= \PSp_4(3)$ and $X= \PSp_4(3).2$, then $X$ has the nonconjugate maximal subgroup pair $U$ and $V$ with the shape $U=3_+^{1+2}:2S_4$ and $V=3^3:(S_4\times 2)$ (\cite[Table 4]{TT} or \cite[p.26]{Atlas}). Since $5\notin \pi(U)$ and $5\notin \pi(V)$, we have $X\neq UV$. Since $|X:T|=2$ and $X$ has Sylow $3$-subgroups of order $3^4$, we have that $U\cap T\neq 1\neq V\cap T$.

 If $T=\PSp_6(3)$ and $X=\PSp_6(3).2$, then by \cite[p.113]{Atlas} one can take $U=2^{2+6}:3^3:D_{12}$ and $V=S_5$ as two nonconjugate maximal subgroups of $X$. Since $\{7, 13\}\subseteq \pi(X)$, we see that $X\neq UV$ by the orders of $U$ and $V$. Lemma \ref{lem:small-outer-auto} implies that $U\cap T$ and $V\cap T$ are nontrivial subgroups. 

 (II) Assume $q$ is even. Suppose first that $n=2$ and  $T=\PSp_4(q)$. Since $\PSp_4(2)\simeq S_6$, we let $q\geq 4$. If $q=4$, then $X=T.2$ or $T.4$. If $X=T.2$, let $U=(A_5\times A_5).2^2$ and let $V=S_6\times C_2$ (see \cite[Table 5]{TT}). Since $|X:T|\leq 4$ and the Sylow $5$-subgroup of $T$ has order $25$, we see that $U\cap T\neq 1$ and $V\cap T\neq 1$ because $5\in \pi(U)$ and $5\in \pi(V)$.  Similarly, we can discuss the case for $T.4$. Now suppose $q\geq 8$. By \cite[Table 2]{LPS}, if we take  $U=N_X(Sz(q))$ and
$V=N_X(O_4^-(q))$, then $X\neq UV$ and by \cite[Theorem 3.7]{Wilson}, $Sz(q)\leq T\cap U$ and  $O_4^-(q\leq T\cap V$.  Now Suppose that $n\geq 3$, then
$U=N_X(P_2)$ is not a factor in any maximal factorization of $X$ by \cite[Tables 1 and 2]{LPS} and therefore Lemma~\ref{lem:sylow-replacement} applies to this case.

\item Suppose  $T=U_n(q)$ with $n\geq 3$.

(I) Suppose that $n$ is odd. If $T\notin\{U_3(3), U_3(5), U_3(8), U_9(2)\}$, then $X$ has no nontrivial maximal factorization by \cite[Table 1]{LPS}. Here claim \eqref{eq:nontrivial-intersections} follows from Corollary \ref{cor:no-factorization}.

If $T\in\{U_3(3), U_3(5), U_3(8), U_9(2)\}$, then $2\leq |X:T|\leq 18$ and for any nonconjugate maximal subgroup pair $U$ and $V$ with $X\neq UV$, we have $|U|> 18$ and $|V|> 18$ by \cite{Atlas}. Hence, by Lemma \ref{lem:small-outer-auto}, we have $U\cap T$ and $V\cap T$ are nontrivial.

(II) Assume $n$ is even. If $T\neq U_4(2)$, $U_4(3)$, then $U=N_X(P_1)$ is not a factor in any nontrivial maximal factorization and $X=TU$, $U\cap T=P_1\neq 1$ and so we apply
Lemma~\ref{lem:sylow-replacement}. 

If $T=U_4(3)$, then $|X:T|\leq 8$ and any nonconjugate maximal subgroup pair $U$ and $V$ given in \cite[Table 7]{TT} has subgroups of order $3^4$. Lemma \ref{lem:small-outer-auto} implies  that  $U\cap T$ and $V\cap T$ are nontrivial.

Since $U_4(2)\simeq \PSp_4(3)$, the proof of this case is complete. 

\item Suppose $T=O_{2n+1}(q)$ with $n\geq 3$, $O_{2n}^+(q)$ with $n\geq 4$, $O_{2n}^-(q)$ with $n\geq 4$. 

In all these cases, by \cite[Tables 1-4]{LPS}, if we take $U=N_X(P_2)$, then $U$ is not a factor in any nontrivial maximal factorization with $X=TU$ and $U\cap T=P_2\neq 1$ and so $U$
satisfies the conditions of Lemma~\ref{lem:sylow-replacement}.

\item Suppose that $T$ is an exceptional Chevalley group of  Lie type.

 If  $T\in \{G_2(3^a), G_2(4), F_4(2^a)\}$, then choose $U=N_X(P_1)$ so that by \cite[Table 5]{LPS}, $T$ is not a factor in
 any nontrivial maximal factorization of $X$, and $X=UT$ and $U\cap T=P_1$. Therefore we can apply Lemma~\ref{lem:sylow-replacement}.
 The remaining exceptional groups have no maximal
 factorization and Corollary \ref{cor:no-factorization} applies.
\end{itemize}

We have therefore obtained nonconjugate maximal subgroups $U,V<X$
satisfying
\eqref{eq:nontrivial-intersections}. 

Put
$K=U\cap T$. Suppose that $K<J<T$ and that $J$ is a $U$-invariant subgroup. Then
$UJ$ is a subgroup properly containing $U$. Moreover,
\begin{equation*}
(UJ)\cap T=J(U\cap T)=J<T.
\end{equation*}
Thus $UJ<X$, contradicting the maximality of $U$. Hence $K$ is
maximal among the proper $U$-invariant subgroups of $T$. The proof
for $L=V\cap T$ is similar.

Finally, if $K$ and $L$ were conjugate in $X$, then their normalizers
$U=N_X(K)$ and $V=N_X(L)$ would be conjugate in $X$. This is
impossible. Hence $K$ and $L$ are not $X$-conjugate.
\end{proof}

\section{The product-socle lifting theorem}

We first recall some basic facts about finite primitive groups. Let
$G$ be a group acting faithfully and transitively  on a set
$\Omega$. $G$ is said to be primitive if $G$ does not have
nontrivial blocks. This is equivalent to  $G$ possessing  a core-free 
maximal subgroup $M$ such that $M_G=1$, where $M_G$ denotes the core
of $M$ in $G$, that is $M_G=\cap_{g\in G}M^g$ (see \cite[Definition 1.1.6]{BB} or \cite[Theorem 2.1.4]{Faw}). A nontrivial primitive
permutation group $G$ is isomorphic to a group that is either of affine
type, twisted wreath type, almost simple type, diagonal type, or
product type (see \cite[Theorem 2.7.1]{Faw}).

We now prove the key new structural step.

\begin{theorem}[Product-socle lifting]\label{thm:product-socle}
Let $G$ be a finite primitive group with a unique minimal normal
subgroup
\begin{equation}\label{eq:socle-product}
R=T_1\times\cdots\times T_k,
\end{equation}
where $k\geq 2$ and the $T_i$ are isomorphic nonabelian simple
groups. Then $G$ has nonconjugate maximal subgroups $A,B<G$ such
that
\begin{equation*}
AB\neq G.
\end{equation*}
\end{theorem}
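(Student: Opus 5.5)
The plan is to reduce to the almost simple case through the normaliser of one simple factor, and then lift the pair supplied by Proposition~\ref{prop:rigid-component} back to $G$ as normalisers of product subgroups of $R$.

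\emph{Setting up the component.} Put $N=N_G(T_1)$. Since $R$ is the unique minimal normal subgroup, $G$ permutes $\{T_1,\dots,T_k\}$ transitively by conjugation, so $|G:N|=k$. Choose a transversal $g_1=1,g_2,\dots,g_k$ of $N$ in $G$ with $T_1^{g_i}=T_i$. Let $C=C_N(T_1)$ and $X=N/C$. Because $C_G(R)=1$ (the socle is nonabelian and unique), $X$ is almost simple with $T\cong T_1C/C\le X\le\Aut(T)$. Apply Proposition~\ref{prop:rigid-component} to $X$. This gives nonconjugate maximal subgroups $U,V<X$ with $X=TU=TV$ and $X\ne UV$. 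It also gives $K=U\cap T$ and $L=V\cap T$, which are nontrivial, proper, self-normalising in the sense $U=N_X(K)$ and $V=N_X(L)$, maximal among proper $U$- (respectively $V$-) invariant subgroups of $T$, and not $X$-conjugate. Regard $K,L\le T_1$.

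\emph{Lifting.} Define
\begin{equation*}
Y_K=\prod_{i=1}^k K^{g_i}\le R,\qquad Y_L=\prod_{i=1}^k L^{g_i}\le R,
\end{equation*}
and put $A_0=N_G(Y_K)$ and $B_0=N_G(Y_L)$. Let $\tilde U$ be the full preimage of $U$ in $N$.
\begin{enumerate}
\item Because $U=N_X(K)$, the group $\tilde U$ normalises $K$. Replacing the transversal by suitable elements (the standard wreath-product embedding $G\le N/C\wr S_k$), the group $A_0$ contains a subgroup whose intersection with $N$ maps onto $U$.
\item Using $X=TU$, deduce $A_0R=G$. Since $Y_K$ is proper in $R$, also $R\not\le A_0$.
\end{enumerate}
Take maximal subgroups $A\ge A_0$ and $B\ge B_0$. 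The group $A\cap R$ is $A$-invariant and proper. Its projection $\pi_1(A\cap R)$ to $T_1$ is invariant under the image of $A\cap N$ in $X$, which contains $U$, and it contains $K$. By the maximality of $K$ among proper $U$-invariant subgroups, $\pi_1(A\cap R)=K$. If $\pi_1(A\cap R)$ were all of $T_1$, then $A\cap R$ would be a full diagonal-type subgroup, and I would rule this out by checking that such an $A$ cannot contain $Y_K$ with $K\ne 1$. Consequently the image of $A\cap N$ in $X$ normalises $K$ and so lies in $U=N_X(K)$; maximality then forces the image to equal $U$, and likewise the image of $B\cap N$ equals $V$. Nonconjugacy of $A$ and $B$ then follows, because the projections $K$ and $L$ of $A\cap R$ and $B\cap R$ are not $X$-conjugate, and conjugation in $G$ permutes the factors compatibly with the transversal.

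\emph{Properness of the product, the main obstacle.} Suppose $G=AB$. Since $A$ is transitive on the factors, replacing $B$ by a conjugate $B^a$ with $a\in A$ (note $AB^a=G$ still holds) we may compare stabilisers of $T_1$. I would count double cosets: $G=AB$ means $A$ is transitive on $G/B$. Restricting to $N$, and using that $B\cap N$ is the stabiliser of $T_1$ in the transitive group $B$, I aim to derive $N=(A\cap N)(B\cap N)R$ or a similar identity. Passing to $X$ would then give $X=UVT=UV$, since $T\le TU=X$ already, so one needs the stronger statement $X=U V$ exactly. This would contradict $X\ne UV$. The delicate point is that $G=AB$ does not immediately restrict to a factorisation of the point stabiliser $N$. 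If the direct restriction fails, the fallback is to use the order comparison $|G|\,|A\cap B|=|A|\,|B|$ together with the structure $A\cap R=\prod K^{g_i}$ and $B\cap R=\prod L^{g_i}$: from $R=(A\cap R)(B\cap R)$ one gets $T_1=KL$, and then $X=UV$. So the key step I would need to establish is that $AB=G$ forces $R=(A\cap R)(B\cap R)$.
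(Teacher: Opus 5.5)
There is a genuine gap, and it is exactly where you put the ``main obstacle'': nothing in the proposal proves $AB\neq G$. Up to that point your construction matches the paper's, and the maximality and nonconjugacy parts can be completed as you indicate. That construction is: the normalisers of $Y_K=\prod_i K^{g_i}$ and $Y_L=\prod_i L^{g_i}$ built from one common transversal, the supplement $A_0R=G$, and the projection dichotomy using $K\neq 1$ to exclude full projections.

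Restricting $G=AB$ to $N=N_G(T_1)$ only yields $X=UVT$, which carries no information, as you essentially note. Your fallback therefore rests on the claim that $AB=G$ forces $R=(A\cap R)(B\cap R)$, and you give no argument for it. It is not a general property of factorisations. In $G=\langle a\rangle\times\langle b\rangle\cong C_2\times C_2$ with $R=\langle ab\rangle$, the maximal subgroups $\langle a\rangle,\langle b\rangle$ satisfy $\langle a\rangle\langle b\rangle=G$, yet both meet $R$ trivially. In your setting the claim holds only because its hypothesis $AB=G$ never occurs, so proving it amounts to proving the theorem.

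The missing idea is a coordinate argument in the wreath embedding you already invoke in step~1.

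\emph{Coordinates of $A_0$.} For $g\in G$ with $T_i^g=T_{i\pi}$, put $n_i=g_igg_{i\pi}^{-1}\in N$ and let $x_i$ be its image in $X$. Then $Y_K^g=\prod_i (K^{n_i})^{g_{i\pi}}$. So $g$ normalises $Y_K$ if and only if every $x_i\in N_X(K)=U$. Thus $A_0$ consists of the elements of $G$ all of whose coordinates lie in $U$, and likewise $B_0$ with $V$.

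\emph{$A=A_0$ and $B=B_0$.} This follows from your own analysis. Once $\pi_1(A\cap R)=K$, transitivity of $A$ and $Y_K\le A$ give $\pi_i(A\cap R)=K^{g_i}$ for every $i$. Hence $A\cap R=Y_K$, and so $A\le N_G(Y_K)=A_0$.

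\emph{Coordinates of a product.} Writing $y_j$ for the coordinates of $h$, the coordinates of $gh$ are $x_i\,y_{i\pi}$. So every element of $AB$ has all of its coordinates in $UV$.

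\emph{The contradiction.} Choose $x\in X\setminus UV$ and $g\in N$ inducing $x$ on $T_1$. Since $g_1=1$ and $1\pi=1$, the first coordinate of $g$ is $x$, so $g\notin AB$.

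This is how the paper finishes (Step (VII) of its proof). It makes the detour through $R=(A\cap R)(B\cap R)$ unnecessary.
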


\begin{proof}
We proceed with the proof via the following steps.

(I) \textit{Embed $G$ into the
permutational wreath product.}

It is clear that the factors $T_i$ form a single orbit under
conjugation by $G$ and $C_G(R)=1$. Fix $T_1\cong T$, and let $X$ be
the group induced by $N_G(T_1)$ on $T_1$. Since $C_G(R)=1$, the
inner automorphisms induced by $T_1$ identify $T$ with a normal
subgroup of $X$, and
\begin{equation*}
T\leq X\leq\Aut(T).
\end{equation*}
Let $P$ be the transitive permutation group induced by $G$ on the
set of components $\{T_1,\ldots,T_k\}$. After choosing
identifications $T_i\cong T$, one can embed $G$ into the
permutational wreath product
\begin{equation}\label{eq:wreath-embedding}
X\wr P=X^k\rtimes P,
\end{equation}
with $R=T^k$ as the inner base group; see, for example,
\cite[Section 4.3]{DM} or \cite[Section 2]{Faw}.

We use these
wreath-product coordinates throughout the rest of the proof.

(II) \textit{Construction of subgroups.}

Applying  Proposition~\ref{prop:rigid-component} to $T\leq
X\leq\Aut(T)$, we have that there are maximal subgroups $U,V<X$ and
subgroups
\begin{equation*}
K=U\cap T,\qquad L=V\cap T
\end{equation*}
with all the properties listed there, that is 

(i) $X=UT=VT$;

(ii) $X\neq UV$;

(iii) $K\neq 1$ and $L\neq 1$;

(iv) $U=N_X(K)$ and $V=N_X(L)$. 

 Define
\begin{equation*}
C=K^k,\qquad D=L^k,
\end{equation*}
and set
\begin{equation}\label{eq:AB-normalizers}
A=N_G(C),\qquad B=N_G(D).
\end{equation}

Since $U=N_X(K)$, we have $$N_{X\wr P}(C)=N_X(K)^k\rtimes P=U^k\rtimes P.$$
Intersecting with $G$, we have $$A=N_G(C)=G\cap (U^k\rtimes P).$$
The same argument holds for $B$. Consequently, we obtain 
\begin{equation}\label{eq:AB-wreath}
A=G\cap(U^k\rtimes P),\qquad
B=G\cap(V^k\rtimes P).
\end{equation}

(III) \textit{With the notation above, $G=RA=RB$.}

Take an arbitrary element $g\in G$. Let $$g=(x_1, \ldots, x_k)\pi$$ with every $x_i\in X$. By $X=TU$, for every $i$, we set $x_i=t_iu_i$ with $t_i\in T$ and $u_i\in U$. Write $$r=(t_1^{-1}, \ldots, t_k^{-1}).$$Then $r\in T^k=R$. Then $$rg=(u_1, \ldots, u_k)\pi.$$ In particular, $$rg\in U^k\rtimes P.$$ Since $rg\in G$, we have $rg\in A$ and so $$g=r^{-1}(rg)\in RA.$$ Thus $G\subseteq RA$. It follows that 
\begin{equation}\label{eq:GRA}
G=RA.
\end{equation}

Similarly, with $X=TV$, we have 
\begin{equation}\label{eq:GRB}
G=RB.
\end{equation}

Since $R$ acts trivially on the set of components, \eqref{eq:GRA}
and \eqref{eq:GRB} imply that both $A$ and $B$ induce the same
transitive permutation group $P$ as $G$.

(IV) \textit{With the same notation, $A\cap R=C$ and $B\cap R=D$.}

Since $A=N_G(C)$ and $R\leq G$, $A\cap R=N_R(C)$. Let $$r=(t_1, \ldots, t_k)\in T^k=R.$$ Then $$C^r=K^{t_1}\times \cdots \times K^{t_k}.$$
Thus $r$ normalizes $C$ if and only if every $t_i$ normalizes $K$. Therefore $$N_R(C)=N_T(K)^k.$$
Now $$N_T(K)=T\cap N_X(K)=T\cap U=K,$$ because $N_X(K)=U$ and $K=U\cap T$. Therefore $$A\cap R=N_R(C)=K^k=C.$$
The proof for $B\cap R=D$ is identical. Hence we have 
\begin{equation}\label{eq:intersections}
A\cap R=C,\qquad B\cap R=D.
\end{equation}

(V) \textit{The subgroups $A$ and $B$ are  maximal subgroups of $G$. }

Let
\begin{equation*}
A<J\leq G
\end{equation*}
and put
\begin{equation*}
E=J\cap R.
\end{equation*}
Then $C\leq E$, and $E$ is normalized by $A$.

Let $E_i$ denote the projection of $E$ on $T_i\cong T$.  In order to show that $E_i$ is $U$-invariant, we first identify the component action of $A$.  We claim that for each $i$, the group induced by $N_A(T_i)$ on $T_i$ is exactly $U$. One inclusion follows immediately from $A\leq U^k\rtimes P$. Conversely, let $u\in U$. Lift $u$ to an element of $N_G(T_i)$, say $g$. Since $G=RA$  by Step (III) and $R$ acts trivially on the set of $\{T_1, \ldots, T_k\}$, $T_i^g=T_i^a$ for some element $a\in A$. Thus every  $u\in U$
 actually occurs as the action of some element of $N_A(T_i)$, so   the group induced by $N_A(T_i)$ on the component $T_i$ is exactly   $U$. 

 Now $E$ is normalized by $A$ and therefore the projection $E_i$ is invariant under every element induced from $N_A(T_i)$. Since this induced group is $U$, we conclude that  $E_i$ is $U$-invariant in $T_i$. Since $C=K^k\leq E$, $E_i$
contains $K$ and so $$K\leq E_i\leq T_i.$$ 
By Proposition~\ref{prop:rigid-component}, we have that  $K$  is maximal among proper  
$U$-invariant subgroups of $T_i$, which implies that 
\begin{equation}\label{eq:projection-dichotomy}
E_i=K\quad\text{or}\quad E_i=T.
\end{equation}
The transitivity of $A$ on the components forces the same
alternative to hold for every $i$.

If $E_i=K$ for all $i$, then
\begin{equation*}
C\leq E\leq K^k=C,
\end{equation*}
so $E=C$. Since $E\normal J$, we obtain
\begin{equation*}
J\leq N_G(C)=A,
\end{equation*}
contrary to $A<J$. Thus this case gives $J=A$.

Suppose now that $E_i=T$ for every $i$. Put
\begin{equation*}
Q_i=E\cap T_i.
\end{equation*}
The inclusion $C=K^k\leq E$ gives
\begin{equation*}
1<K\leq Q_i.
\end{equation*}
For any $t\in T_i$, choose $e\in E$ whose $i$th projection is $t$,
which is possible because $E_i=T$. Conjugation by $e$ on $Q_i$
agrees with conjugation by $t$, since all other components
centralize $T_i$. As $E\normal J$, this shows that $Q_i\normal T_i$.
The simplicity of $T_i$ and the nontriviality of $Q_i$ imply
\begin{equation*}
Q_i=T_i.
\end{equation*}
Hence $E=R$. By \eqref{eq:GRA}, the subgroup $J$ contains
\begin{equation*}
RA=G,
\end{equation*}
so $J=G$. We have proved that $A$ is maximal. The same argument
proves that $B$ is maximal.

(VI) \textit{The subgroups $A$ and $B$ are nonconjugate in $G$.}

Suppose that $A^g=B$ for some $g\in G$. Since $R\normal G$,
equation \eqref{eq:intersections} gives
\begin{equation*}
C^g=(A\cap R)^g=B\cap R=D.
\end{equation*}
In the component representation, $g$ permutes the coordinates and
acts on each one by an element of $X$. Therefore the equality
$C^g=D$ implies that $K$ and $L$ are $X$-conjugate, contradicting
Proposition~\ref{prop:rigid-component}. Thus $A$ and $B$ are
nonconjugate.

(VII) \textit{The product $AB$ is proper in $G$. }

From
\eqref{eq:AB-wreath}, we know that an element $a\in A$ has the form$$a=(a_1, \ldots, a_k)\sigma$$ with every $a_i\in U$,  and an element $b\in B$ has the form $$b=(b_1, \ldots, b_k)\tau$$ with every $b_i\in V$. The multiplication rule in a wreath product means that the  top permutation of $ab$
is $\sigma\tau$, and the base coordinates of $ab$ are products of elements of $U$ and $V$.   Hence each
coordinate belongs to $UV$. Therefore 
\begin{equation}\label{eq:product-inclusion}
AB\subseteq (U^k\rtimes P)(V^k\rtimes P)
\subseteq (UV)^kP.
\end{equation}
This indicates that any element of $AB$ has every base coordinate lying in $UV$. Now choose $x\in X\setminus UV$. By the definition of $X$, there is an
element $g\in N_G(T_1)$ inducing $x$ on $T_1$. The top permutation
of $g$ fixes the first component, and the first base coordinate of
$g$ is $x$. Every element of the right-hand side of
\eqref{eq:product-inclusion} with the same top permutation has first
base coordinate in $UV$. Hence $g\notin AB$. Therefore
\begin{equation*}
AB\neq G.
\end{equation*}
This completes the proof.
\end{proof}

\section{Proof of the main theorems}

We now assemble the foregoing argument to prove our main result. For convenience, we restate Theorem ~\ref{thm:main} as follows. 

\begin{theorem}
A group $G$ with the property $\cP(G)$ is solvable.
\end{theorem}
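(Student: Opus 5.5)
I will argue by minimal counterexample. Let $G$ be a nonsolvable group with $\cP(G)$ and $|G|$ as small as possible. By Lemma~\ref{lem:quotient}, $\cP$ passes to every quotient $G/K$. Minimality then forces every proper quotient of $G$ to be solvable. This gives two immediate consequences. First, if $N_1,N_2$ were distinct minimal normal subgroups, then $G$ would embed in $G/N_1\times G/N_2$, which is solvable; so $G$ has a unique minimal normal subgroup $R$. Second, since $G/R$ is solvable and $G$ is not, $R$ itself is nonsolvable. As a minimal normal subgroup, $R$ is then a direct product $T_1\times\cdots\times T_k$ of isomorphic nonabelian simple groups.

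Next I reduce to the primitive case. I claim $\Phi(G)=1$. Otherwise $R\le\Phi(G)$ by uniqueness of the minimal normal subgroup, and $G/\Phi(G)$ is solvable. This would make $G$ solvable, since $\Phi(G)$ is nilpotent and a group is solvable when its quotient by the Frattini subgroup is. Hence some maximal subgroup $M$ does not contain $R$. Its core $M_G$ is normal and cannot contain $R$, so by uniqueness of $R$ we get $M_G=1$. Thus $G$ acts faithfully and primitively on the cosets of $M$, with unique minimal normal subgroup $R$, and $C_G(R)=1$ because $C_G(R)$ is normal and meets $R$ trivially.

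The argument then splits on $k$. If $k\ge2$, Theorem~\ref{thm:product-socle} yields nonconjugate maximal subgroups $A,B$ with $AB\ne G$, contradicting $\cP(G)$. If $k=1$, then $C_G(R)=1$ gives $T\le G\le\Aut(T)$ with $T=R$. The main theorem of \cite{TT}, as packaged in Proposition~\ref{prop:rigid-component}, gives nonconjugate maximal $U,V$ with $UV\ne G$, again a contradiction. Hence no counterexample exists.

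The only step needing some care is $\Phi(G)=1$, and the Frattini solvability fact above settles it. Everything else is already supplied by the earlier results, so I expect no genuine obstacle.
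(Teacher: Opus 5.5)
Your proposal is correct and follows the paper's proof essentially step for step. Both argue by minimal counterexample, obtain a unique nonsolvable minimal normal subgroup $R$, use the nilpotency of $\Phi(G)$ to get a core-free maximal subgroup and hence a faithful primitive action, and then invoke Proposition~\ref{prop:rigid-component} for $k=1$ and Theorem~\ref{thm:product-socle} for $k\geq 2$. One small improvement over the paper is your justification that $C_G(R)=1$ (it is normal and meets $R$ trivially), a point the paper merely asserts.
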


\begin{proof}
Assume the assertion is false and let $G$ be a counterexample of
minimal order. By Lemma~\ref{lem:quotient}, every proper quotient of
$G$ is solvable.

If $R_1$ and $R_2$ were distinct minimal normal subgroups, then
\begin{equation*}
R_1\cap R_2=1,
\end{equation*}
and the homomorphism
\begin{equation*}
G\rightarrow G/R_1\times G/R_2
\end{equation*}
would be injective. Both quotient groups are solvable by minimality,
so their direct product is solvable, and hence $G$ would be
solvable. This is a contradiction. Let $R$ be the unique minimal
normal subgroup of $G$.

The quotient $G/R$ is solvable. The subgroup $R$ cannot be solvable,
since then $G$ would be an extension of solvable groups. A minimal
normal subgroup is characteristically simple, so
\begin{equation}\label{eq:R-main}
R=T_1\times\cdots\times T_k
\end{equation}
for isomorphic nonabelian simple groups $T_i$ and $C_G(R)=1$.

The Frattini subgroup $\Phi(G)$ is nilpotent, so it cannot contain
the nonsolvable subgroup $R$. Hence there is a maximal subgroup
$H<G$ with
\begin{equation*}
R\nleq H.
\end{equation*}
Since $R\normal G$ and $H$ is maximal,
\begin{equation*}
G=RH.
\end{equation*}
The core of $H$ in $G$ is trivial. For if $1\neq K\normal G$ and
$K\leq H$, then $K$ contains a minimal normal subgroup of $G$,
necessarily $R$, contradicting $R\nleq H$. Thus the action of $G$ on
the right cosets of $H$ is faithful and primitive. Its unique
minimal normal subgroup, and hence its socle $\Soc(G)$, is equal to  $R$.

If $k=1$, then $R$ is nonabelian simple and $G$  is an almost simple
group satisfying
\begin{equation*}
R\leq G\leq\Aut(R).
\end{equation*}
Proposition ~\ref{prop:rigid-component} supplies nonconjugate maximal subgroups $A,B<G$
with $AB\neq G$, contradicting $\cP(G)$.

If $k\geq 2$, by applying  Theorem~\ref{thm:product-socle} to the
faithful primitive action just constructed, we have that there are
nonconjugate maximal subgroups $A,B<G$ with $AB\neq G$,
contradicting $\cP(G)$.

Both cases are impossible. Therefore every finite group $G$ satisfying property 
$\cP(G)$ is solvable. 
\end{proof}

We now prove Theorem \ref{thm:main2}. In fact, we prove the following result.

\begin{theorem}
A group $G$ is solvable if and only if any two maximal subgroups of $G$ are $c$-permutable in $G$.
\end{theorem}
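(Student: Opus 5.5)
The statement is an equivalence, and I will prove the two directions separately.

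\emph{Solvable implies $c$-permutable.} Let $G$ be solvable and let $M,N$ be maximal subgroups. If $M$ and $N$ are conjugate, say $N=M^{g}$, then taking $x=g^{-1}$ gives $N^{x}=M$. Hence $MN^{x}=M=N^{x}M$, so $M$ is $c$-permutable with $N$.

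If $M$ and $N$ are not conjugate, I invoke the fact recalled in the introduction (cf.\ \cite[2.4.3]{Guo2}): in a solvable group, two maximal subgroups are conjugate if and only if they have the same core, and consequently $G$ is the product of any two nonconjugate maximal subgroups. Taking $x=1$, this gives $MN=G$. The same fact applied to the pair $N,M$ gives $NM=G$, so $MN=NM$.

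\emph{$c$-permutable implies solvable.} The plan is to show that the hypothesis forces property $\cP(G)$, and then apply Theorem~\ref{thm:main}. Let $M,N$ be nonconjugate maximal subgroups. By hypothesis there is $x\in G$ with $MN^{x}=N^{x}M$, so $MN^{x}$ is a subgroup containing $M$.

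By maximality of $M$, either $MN^{x}=G$ or $MN^{x}=M$.
\begin{itemize}
\item In the second case $N^{x}\le M$. Since $N^{x}$ is maximal and $M\neq G$, this forces $N^{x}=M$, contradicting nonconjugacy. So $MN^{x}=G$.
\item From $MN^{x}=G$ we get
\[
G=x^{-1}Gx=M^{x^{-1}}N .
\]
\end{itemize}

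What remains is to pass from $M^{x^{-1}}N=G$ to $MN=G$. This step does not need to be done by hand. A factorization $G=AB$ holds if and only if $G=A^{g}B^{h}$ for all $g,h\in G$, because
\[
A^{g}B^{h}=g^{-1}Ag\,h^{-1}Bh=g^{-1}A(gh^{-1})Bh=g^{-1}ABh ,
\]
where the middle step uses $gh^{-1}\in G=AB$ and the fact that $AB$ is invariant under left multiplication by $A$ and right multiplication by $B$. Applying this with $A=M^{x^{-1}}$, $B=N$, $g=x$, $h=1$ gives $MN=G$.

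Thus $\cP(G)$ holds, and Theorem~\ref{thm:main} yields that $G$ is solvable.

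The only place needing care is the passage from a factorization with one conjugated factor back to $MN=G$. The displayed identity settles it, since it shows that whether $G=AB$ holds does not depend on which conjugates of $A$ and $B$ are used.
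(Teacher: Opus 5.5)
Your proposal is correct and follows the paper's proof. For necessity you rely on the same consequence of \cite[2.4.3]{Guo2}; the paper merely spells out that nonconjugate maximal subgroups have distinct cores, so one core, say $M_G$, is not contained in $N$, and then $G=M_GN\subseteq MN$. For sufficiency you use maximality and nonconjugacy to get $MN^{x}=G$, then the conjugation-invariance of factorizations (which you prove by hand where the paper cites \cite[Proposition 2.6]{LX}), and finally Theorem~\ref{thm:main}.

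One cosmetic slip: with your convention $A^{g}=g^{-1}Ag$, conjugating $G=MN^{x}$ by $x^{-1}$ means $x(\cdot)x^{-1}$ rather than $x^{-1}(\cdot)x$. You can bypass that step entirely by applying your identity directly with $A=M$, $B=N^{x}$, $g=1$, $h=x^{-1}$.
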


\begin{proof}
We first prove the necessity. Suppose that $G$ is a solvable group. Let $H$ and $K$ be any two maximal subgroups of $G$. If $H_G=K_G$, then $H$ and $K$ are conjugate in $G$ by \cite[Lemma 2.4.3]{Guo2}. Hence, for some $g\in G$, we have $H^g=K$ and so $H^gK=KH^g$, which implies that $H$ and $K$ are $c$-permutable in $G$. If $H_G\neq K_G$, then $H$ and $K$ are nonconjugate maximal subgroups of $G$. In this case, we either have $H_G$ is not contained in $K$ or have $K_G$ is not contained in $H$. Without loss of generality, we assume $H_G\nleq K$. Then $H_GK=HK=G$ as $K$ is a maximal subgroup. Hence $HK$ is permutable in $G$. Thus we have proved that $H$ and $K$ are $c$-permutable in $G$ in any case.

We next establish the sufficiency.  Suppose that any two maximal subgroups  of $G$ are $c$-permutable in $G$. We let $U$ and $V$ be any two nonconjugate maximal subgroups of $G$. Then there exist some element $x\in G$ such that $U^xV=VU^x$ is a subgroup of $G$. Since $U$ and $V$ are maximal and nonconjugate, we have that $U^xV=G$ and so $G=UV$ by \cite[Proposition 2.6]{LX}. This shows that $G$ has the property $\cP(G)$. Thus $G$ is solvable by Theorem \ref{thm:main}, as wanted.
\end{proof}

\section*{Acknowledgments}
Li was partially supported by the Natural Science Foundation of Suqian, China (K202432), the NSFC  (No. 12471017) and the Scientific Research Foundation for Advanced Talents of Suqian University (No. 2022XRC069). Yang was partially supported by a grant from the Simons Foundation (\#918096, to YY).

\section*{Disclosure Statement}
The authors declare that they have no competing interests and no conflicts of interest.

\section*{Data Availability Statement} Data sharing is not applicable to this article, as no data sets were generated or analysed during the current study.

\end{document}